\newtheorem{theorem}{Theorem}[section]
\newtheorem{lemma}[theorem]{Lemma}
\newtheorem{proposition}[theorem]{Proposition}
\theoremstyle{definition}
\newtheorem{definition}[theorem]{Definition}
\newtheorem{example}[theorem]{Example}
\theoremstyle{remark}
\newcommand{\CC}{\mathbb{C}}
\newcommand{\RR}{\mathbb{R}}
\newcommand{\NN}{\mathbb{N}}
\newcommand{\PP}{\mathbb{P}}
\newcommand{\del}{\delta}
\newcommand{\g}{\gamma}
\newcommand{\diff}{\mathrm{d}}
\newcommand{\delm}{\delta_{-}}
\newcommand{\fb}{\mathbf{f}}
\newcommand{\gb}{\mathbf{g}}
\newcommand{\db}{\mathbf{d}}
\newcommand{\Ib}{\mathbf{I}}
\newcommand{\Jb}{\mathbf{J}}
\newcommand{\codim}{\mathrm{codim}}
\newcommand{\Sp}{\, \mathrm{Span}}
\newcommand{\Sym}{\operatorname{Sym}}
\newcommand{\Gr}{\operatorname{Gr}}
\newcommand{\Symd}{\Sym^d(\CC^{n+1})^*}
\newcommand{\Symdb}{\Sym^\db(\CC^{n+1})^*}
\newcommand{\gL}{\Lambda}
\newcommand{\trans}{\mathrm{t}}
\newcommand{\polyLk}{\Gamma_{\gL_0, k'}}
\newcommand{\rank}{\mathrm{rk}\,}
\newcommand{\cdim}{\dim_{\CC}}
\newcommand{\coker}{\operatorname{coker}}
\newcommand{\calA}{\mathcal{A}}
\newcommand{\calB}{\mathcal{B}}
\newcommand{\calJ}{\mathcal{J}}
\newcommand{\calL}{\mathcal{L}}
\newcommand{\calZ}{\mathcal{Z}}
\author{Paul Larsen}
\address{Humboldt-Universit\"at zu Berlin, Institut f\"ur Mathematik, 10099 Berlin, Germany}
\email{larsen@mathematik.hu-berlin.de}
\title{Notes on Fano varieties of complete intersections}
\numberwithin{equation}{section}
\begin{document}
\begin{abstract}
Fano varieties are subvarieties of the Grassmannian whose points parametrize linear subspaces contained in a given projective variety. These expository notes give an account of results on Fano varieties of complete intersections, with a view toward an application in machine learning. The prerequisites have been kept to a minimum in order to make these results accessible to a broad audience. 
\end{abstract}
\maketitle

\section{Introduction}
The $k$th Fano variety of a projective variety $X \subseteq \PP^n$ is the subvariety of the Grassmannian $\Gr(k,n)$ parametrizing all $k$-planes contained in $X$.\footnote{In addition to this notion of Fano variety, there is another concept in algebraic geometry relating to the positivity of a variety's anticanonical bundle. These two classes of varieties, unfortunately bearing the same name, are in general unrelated.}
These notes give an expanded account of results on Fano
varieties of complete intersections, as studied in \cite{ MR510081, MR1440180,MR1446187, MR1654757}. They are also intended as a companion to the paper
\cite{MLFano}, which extends these results to answer questions arising in machine learning. It is
our hope to make these results more accessible to
non-specialists in algebraic geometry. We assume only basic knowledge of affine and projective varieties, plus familiarity with a few key examples, such as Grassmannians, though even these prerequisites could be replaced by facility with local calculations of the sort that arise in an introductory differential geometry course.

In addition to the above research papers, we refer the reader to \cite{MR1788561, MR2290010} for background on algebraic geometry.  A very nice treatment of Fano varieties of hypersurfaces can be found in \cite{waldronThesis}. We always work over $\CC$, though all results below carry over to any algebraically closed field. In general, we do not distinguish between algebraic varieties and algebraic sets, i.e. we allow varieties to be reducible.

\section{Background}
\label{sec:basicAG}
In this section we introduce tangent spaces to algebraic varieties, coordinates for Grassmannians, Fano varieties and several theorems on fiber dimension.

\subsection{Tangent Spaces}
There are various notions of tangent spaces in algebraic geometry. Since local calculations play a significant role in these notes, we define the tangent space to $X$ at a point $p$ in terms of an affine patch $U$ containing $p$.
\begin{definition}
\label{def:tanSpace}
Let $U \subseteq \CC^{n}$ be an affine variety defined by equations $f_1, \ldots, f_s \in \CC[x_1, \ldots, x_n]$. Then the \emph{Zariski tangent space} to $U$ at a point $p$ is the kernel of the Jacobian matrix, $J = ( \frac{\partial f_i}{\partial x_j}|_p)_{\substack{1 \leq i \leq s}{1 \leq j \leq n}}$.
\end{definition}
In the case of a projective variety defined by homogeneous polynomials $f_1, \ldots, f_s \in \CC[z_0, \ldots, z_n]$, we calculate the tangent space to $V = V(f_1, \ldots, f_s) \subseteq \PP^n$ at a point $p$ by choosing one of the open affine coordinate patches $U_i$ containing $p$, where $U_i = \{[z_0, \ldots, z_n]: z_i \neq 0\} \cong \CC^n$. For notational simplicity, we assume $i=0$, so that the isomorphism $U_0 \cong \CC^n$ is given by the map $[z_0, \ldots, z_n] \to  (z_1/z_0, \ldots, z_n/z_0) = (x_1, \ldots, x_n)$. We dehomogenize the defining equations, and then calculate the tangent space as in Definition \ref{def:tanSpace}. Note that the resulting space is contained in $\CC^n \cong U_0$. If we instead wanted the tangent space defined in the original ambient projective space (sometimes called the \emph{projective tangent space}), we take the its closure in $\PP^n$. 

We now turn to a few examples and basic properties.
\begin{enumerate}
\item For a hypersurface $V(f) \subseteq \CC^n$, the tangent space at a point $p \in V(f)$ is the perpendicular subspace to the vector $\diff f|_p = (\frac{\partial f}{\partial x_1}|_p, \ldots, \frac{ \partial f}{\partial x_n}|_p)$.
\item Since the tangent space of an intersection of hypersurfaces is the intersection of the respective tangent spaces, a variety $V = V(f_1, \ldots, f_s)$ has tangent space given by the intersection of the perpindicular subspaces, $\cap_{i=1}^s (\diff f_i |_p)^\perp$, which is precisely the kernel of the Jacobian matrix above.
\item Let $f = z_0 z_2 - z_1^2$, and let $V = V(f) \subseteq \PP^2$ be the resulting quadric surface. If $p = [1,0,0]$, we take the coordinate patch $U_0$, where the defining equation of $V$ is $f_0 = x_2 - x_1^2$. Then the tangent space at $(0,0)$ is $(0,1)^\perp = V(x_2) \subseteq \CC^2$. Its closure in $\PP^2$ is defined by the equation $z_2 = 0$, and is therefore the projective line $\{[z_0, z_1, 0]: z_0 \neq 0 \textrm{ or } z_1 \neq 0\} \subseteq \PP^2$. Note that this projective subspace coincides with the perpindicular subspace of the formal differential $\diff f|_p = [\frac{\partial f}{\partial z_0}|_p, \frac{\partial f}{\partial z_1}|_p, \frac{\partial f}{\partial z_2}|_p] = [0,0,1]$ in $\PP^2$. 

More generally, if $f \in \CC[z_0, \ldots, z_n]$ is homogeneous of degree $d$, the projective tangent space can be calculated directly from the formal differential $\diff f$. Without loss of generality, assume $p \in V(f) \cap U_0$. By Euler's formula, $d \, f = \sum_{i=0}^n z_i|_p \frac{\partial f}{\partial z_i}|_p$, and since
 $\frac{\partial f_0}{\partial x_i}|_{(x_1, \ldots, x_n)} = \frac{\partial f}{\partial z_i}|_{[1,x_1, \ldots, x_n]}$ for $i =1, \ldots, n$, we obtain $\frac{\partial f}{\partial z_0}|_p = d\, f(p) - \sum_{i=1}^n \frac{\partial f_0}{\partial x_i} x_i = 0$ for $(x_1, \ldots, x_n) \in T_p V$, as both terms on the right vanish. Hence the closure in $\PP^n$ of the Zariski tangent space (i.e. the projective tangent space) coincides with the kernel of the formal differential $\diff f$.
\item If the Jacobian matrix is full-rank at a point $p$, then variety is called \emph{smooth} at $p$. This definition partially agrees with the usual definition from differential geometry, since in differential geometry the non-degeneracy of the Jacobian only gives a necessary condition for smoothness. For example, the Jacobian matrix of $V(y^3 - x^5) \subseteq \RR^2$ is zero at the origin, even though the entire curve is diffeomorphic to $\RR$.
\end{enumerate}

\subsection{Grassmannians and Fano varieties}
The Grassmannian is the algebraic variety $\Gr(k,n)$ parametrizing $k$-dimensional planes in $\PP^n$ (equivalently, $(k+1)$-dimensional subspaces of $\CC^{n+1}$). For example, $\Gr(0,n)$ parametrizes points in $\PP^n$ (equivalently, lines in $\CC^{n+1}$), i.e. $\Gr(0,n) = \PP^n$, while $\Gr(n-1,n)$ parametrizes hyperplanes in $\PP^n$.

Let $e_0, \ldots, e_n$ be a basis of $\CC^{n+1}$, and consider $[\gL_0] = [\Sp(e_0, \ldots, e_k)] \subseteq \Gr(k,n)$ (we will use brackets when we consider this subspace as a point in the Grassmannian, and not as a subspace of $\CC^{n+1}$). Then a coordinate patch $U_0$ about $[\gL_0]$ is given by the row span of all $(k+1)\times(n+1)$ matrices whose first $(k+1)\times(k+1)$ minor is the identity matrix. Every choice of the remaining $(k+1)\times(n-k)$ entries yields a different point in $\Gr(k,n)$, so $U_0 \cong \CC^{(k+1)(n-k)}$.

\begin{example}
Consider $[\gL_0] = [\Sp(e_0, e_1)] \in \Gr(1,3)$. The coordinate patch $U_0$ centered at $[\gL_0]$ consists of the row-spans of the matrices
\begin{equation*}
\begin{pmatrix}
1 & 0 & x_{0,2} & x_{0,3}\\
0 & 1 & x_{1,2} & x_{1,3}
\end{pmatrix}.
\end{equation*}
Suppose we are interested in all lines passing through the points $[1,0,0,0]$. In $U_0$, the coordinates $(x_{0,2}, x_{0,3}, x_{1,2}, x_{1,3})$ must satisfy
\begin{equation*}
\rank
\begin{pmatrix}
1 & 0 & x_{0,2} & x_{0,3}\\
0 & 1 & x_{1,2} & x_{1,3} \\
1 & 0 & 0 & 0
\end{pmatrix} = 2,
\end{equation*}
or equivalently $x_{0,2} = x_{0,3} = 0$. This subvariety is an example of a Schubert cell, namely $\Sigma_{(2,0)} \subseteq \Gr(1,3)$. 
\end{example}

Another class of subvarieties of Grassmannians comes from all $k$-planes contained in a given projective variety $V \subseteq \PP^n$.
\begin{definition}
Let $V \subseteq \PP^n$ be a variety. Then $F_k(V) = \{[\gL] \in \Gr(k,n): \gL \subseteq V\}$ is the $k$th \emph{Fano variety} of $V$.
\end{definition}
\begin{example}
Let $f = z_0 z_2 - z_1 z_3$, and $V = V(f) \subseteq \PP^3$. Then $F_1(V)$ consists of lines on the quadric surface $V$, and is a one-dimensional with two connected components. Note that $f([\lambda_0, \lambda_1, 0, 0]) = 0$ for all $\lambda_0, \lambda_1$, hence $[\gL_0] = [\Sp(e_0, e_1)] \in F_1(V)$. Equations for $F_k(V)$ in the coordinate patch $U_0 \subseteq \Gr(1,3)$ are given by the condition $f([\lambda_0, \lambda_1, \lambda_0 x_{0,2} + \lambda_1 x_{1,2}, \lambda_0 x_{0,3} + \lambda_1 x_{1,3}]) = 0$ for all $\lambda_0, \lambda_1$, i.e. \begin{equation*}
\lambda_0^2 x_{0,2}^2 + \lambda_0 \lambda_1   (x_{1,2} - x_{0,3})+ \lambda_1^2 x_{1,3}= 0
\end{equation*}
for all $\lambda_0, \lambda_1$. Hence $F_1(V)$ is defined locally in $U_0$ by the equations $x_{0,2} = x_{1,3} = x_{1,2} - x_{0,3} = 0$.
\end{example}

\subsection{Incidence correspondences and fiber dimensions}
The Fano variety of a degree $d$ hypersurface in $\PP^n$ can be studied via the following \emph{incidence correspondence}:
\begin{equation}
\label{eq:incCor}
\xymatrix @C=1em{
&I =\ar@{->}[ld]_{p} \ar@{->}[rd]^{q} \{([f], [\gL]): \gL \subseteq V(f)\} &\hspace{-14pt}\subseteq  \PP \Sym^d(\CC^{n+1}) \times \Gr(k,n)\\
\PP \Symd & & \Gr(k,n)},
\end{equation}
where $\Symd$ is the vector space with basis consisting of degree $d$ monomials in the variables $z_0, \ldots, z_n$. Its dimension is $\binom{d+n}{n}$. For example, $\Sym^3(\CC^2)$ has the basis $\{z_0^3, z_0^2 z_1, z_0 z_1^2, z_1^3\}$. Since scaling the defining equation of a hypersurface leaves the hypersurface unchanged, we consider the projectivized vector space of polynomials instead. 

For a given polynomial class $[f] \in \PP \Symd$, the fiber $p^{-1}([f])$ is precisely the Fano variety $F_k(V(f))$. The projection maps $p$ and $q$ are flat, and $q$ is surjective and regular (see \cite{MR1788561}, Sections 4.3 and 6.3). Whether or not $p$ is \emph{dominant} will be an important question for our discussion of Fano varieties. 
\begin{definition}
Let $\phi: V \to W$ be a morphism of varieties. Then $\phi$ is \emph{dominant} if the image $\phi(V)$ is dense in $W$.
\end{definition}

A very useful result is the theorem of the dimension of the fiber:
\begin{theorem}
\label{thm:fibDim1}
Let $\phi: X \to Y$ be a dominant morphism of varieties. Then there exists a non-empty open subset $U \subseteq Y$ such that $U \subseteq \phi(X)$ and for all $y \in U$, $\dim \phi^{-1}(y) = \dim X - \dim Y$. 

Moreover, the function $\mu: p \mapsto \dim (\phi^{-1}(\phi(p)))$ is upper semi-continuous; that is, for all $m \in \NN$, the set $\{p \in X: \mu(p) \geq m\}$ is closed in $X$.
\end{theorem}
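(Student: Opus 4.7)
The plan is to first reduce to the irreducible affine case, then prove part (1) via a transcendence-basis argument that factors $\phi$ through a projection, and finally deduce part (2) by Noetherian induction on $\dim Y$ using part (1).

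For part (1), I would first replace $Y$ by an affine open meeting $\phi(X)$ and $X$ by an affine open of its preimage; both operations preserve dimensions and dominance. Passing to irreducible components, I may assume both $X$ and $Y$ are irreducible. Since $\phi$ is dominant, the comorphism induces an injection of function fields $\phi^{*}: K(Y) \hookrightarrow K(X)$, and by the characterization of dimension as transcendence degree over $\CC$, the extension $K(X)/K(Y)$ has transcendence degree $r = \dim X - \dim Y$. Pick a transcendence basis $t_{1},\ldots,t_{r} \in K(X)$; after further shrinking $X$ so these become regular functions, the tuple $(\phi,t_{1},\ldots,t_{r})$ gives a morphism $\psi: X \to Y \times \mathbb{A}^{r}$ whose image is dense and for which $K(X)/K(Y)(t_{1},\ldots,t_{r})$ is a finite algebraic extension. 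Hence $\psi$ is generically finite: there is an open $W \subseteq Y \times \mathbb{A}^{r}$ over which $\psi$ has finite (in particular, zero-dimensional) fibers. Projecting $W$ to $Y$ and using the fact that dominant morphisms between irreducible varieties are open on a dense subset (or, more elementarily, that the projection $Y \times \mathbb{A}^{r} \to Y$ has all fibers of dimension $r$), one extracts a dense open $U \subseteq Y$ over which $\phi^{-1}(y)$ fibers over $\{y\} \times \mathbb{A}^{r}$ with finite general fibers, so $\dim \phi^{-1}(y) = r$.

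For part (2), I would proceed by Noetherian induction on $\dim Y$. If $\dim Y = 0$, then $Y$ is a finite set and every fiber has the same dimension on each component of $X$, so $\mu$ is locally constant and upper semi-continuity is automatic. For the inductive step, assume the result for all morphisms whose target has strictly smaller dimension. Reducing again to the irreducible case and applying part (1), obtain $U \subseteq Y$ open with $\dim \phi^{-1}(y) = r$ for all $y \in U$. Let $Z = Y \setminus U$, a proper closed subvariety with $\dim Z < \dim Y$. Restricting $\phi$ to each irreducible component of $\phi^{-1}(Z) \to Z$ (with its reduced structure) gives morphisms to which the inductive hypothesis applies, and since for $p \in \phi^{-1}(Z)$ the fiber $\phi^{-1}(\phi(p))$ lies entirely in $\phi^{-1}(Z)$, the function $\mu$ on $\phi^{-1}(Z)$ agrees with the fiber-dimension function of the restriction. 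Therefore $\{p \in \phi^{-1}(Z): \mu(p) \geq m\}$ is closed in $\phi^{-1}(Z)$, hence closed in $X$. Now for any $m$: if $m \leq r$ then $\{\mu \geq m\} = X$ is closed; if $m > r$, then this set is contained in $\phi^{-1}(Z)$ and agrees with the closed set just described, hence is closed in $X$.

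The main obstacle is the factorization step in part (1): producing the generically finite map $\psi: X \to Y \times \mathbb{A}^{r}$ and controlling its fibers requires carefully passing between the function-field picture and regular functions on a suitable affine open, and then showing that finite fibers of $\psi$ combined with $r$-dimensional fibers of the projection indeed yield $r$-dimensional fibers for $\phi$ over a dense open of $Y$. The induction in part (2) is then essentially a bookkeeping argument once part (1) is available.
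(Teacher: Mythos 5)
The paper itself quotes this theorem from the standard references without proof, so there is no in-text argument to compare against; I am assessing your proposal on its own terms. The architecture you chose (reduce to the irreducible affine case, factor $\phi$ through $Y\times\mathbb{A}^{r}$ via a transcendence basis, then deduce semicontinuity by Noetherian induction on the base) is the standard one and is sound in outline. However, there are two genuine gaps, both traceable to the same omission: you never establish the lower bound $\dim\phi^{-1}(y)\geq r$.

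First, in part (1) your construction at best controls the components of $\phi^{-1}(y)$ that meet $\psi^{-1}(W)$. A component of $\phi^{-1}(y)$ whose image under $\psi$ lies in $(\{y\}\times\mathbb{A}^{r})\setminus W$ is not controlled: fibers of $\psi$ over points outside $W$ may be positive-dimensional, so you have no bound on the dimension of such a component. The standard repair is to upgrade ``generically finite'' to ``finite after localization'': by Noether normalization/generic freeness one chooses $g\neq 0$ so that $\CC[X]_{g}$ is a finite module over $\CC[Y][t_{1},\dots,t_{r}]_{g}$, making $\psi$ an honest finite morphism on $X_{g}$, whence every fiber of $\phi|_{X_{g}}$ has dimension at most $r$; the locus $X\setminus X_{g}$, together with the locus discarded when you shrank $X$ to make the $t_{i}$ regular (which may dominate $Y$ and hence contributes to the generic fiber), must then be handled by an induction on $\dim X$ that your write-up does not set up. Even granting the upper bound, the asserted equality $\dim\phi^{-1}(y)=r$ needs the lower bound, which you state but do not prove: for generic $y$ it follows from Chevalley's theorem (the image of $\psi$ contains a dense open of $Y\times\mathbb{A}^{r}$, so $\psi(\phi^{-1}(y))$ is dense in $\{y\}\times\mathbb{A}^{r}$ for $y$ in a dense open of $Y$), and for all $y\in\phi(X)$ from Krull's principal ideal theorem (the fiber is cut out locally by $\dim Y$ equations). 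Second, part (2) invokes ``$\{\mu\geq m\}=X$ for $m\leq r$,'' which is precisely the universal lower bound $\dim\phi^{-1}(\phi(p))\geq\dim X-\dim Y$ for \emph{every} $p\in X$ --- again the Hauptidealsatz --- and is not implied by part (1), which only concerns a dense open $U$. Without it, your induction only exhibits $\{\mu\geq m\}$ as the union of the open set $\phi^{-1}(U)$ with a closed subset of $\phi^{-1}(Z)$, which need not be closed. Once the Krull lower bound is supplied, both gaps close and the remaining bookkeeping, including the reduction to irreducible components, goes through.
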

For a precise definition of dimension, we refer to \cite{MR1788561} and \cite{MR1322960}. This result can be seen as an algebro-geometric version of the rank-nullity theorem from linear algebra. 

In case the fiber $f^{-1}(y)$ is reducible, then the dimension statement refers to each irreducible component of $f^{-1}(y)$. Another result in this direction implies irreducibility of the domain variety given irreducibility of the target, plus conditions on the fibers.
\begin{theorem}[See \cite{MR1322960}, Exercise 14.3]
\label{thm:fibDim2}
Let $X$ be an irreducible variety, and let $Z \subseteq \PP^n \times X$ be a subvariety (not \emph{a priori} irreducible). Suppose that the fibers of the projection map $\pi_2: Z \to X$ are irreducible of constant dimension. Then $Z$ is irreducible. 
\end{theorem}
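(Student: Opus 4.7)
The plan is to decompose $Z$ into its irreducible components $Z = Z_1 \cup \cdots \cup Z_r$ and show that exactly one $Z_i$ exhausts $Z$. For each $i$ set
\[
B_i = \{x \in X : \pi_2^{-1}(x) \subseteq Z_i\} \subseteq X.
\]
First I would observe that $X = B_1 \cup \cdots \cup B_r$: for any $x \in X$, the fiber $\pi_2^{-1}(x)$ is irreducible and is the union of the finitely many closed subsets $Z_i \cap \pi_2^{-1}(x)$. A finite union of proper closed subsets of an irreducible variety cannot exhaust it, so $Z_i \cap \pi_2^{-1}(x) = \pi_2^{-1}(x)$ for some $i$, placing $x$ in $B_i$.

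The principal obstacle is showing that each $B_i$ is closed in $X$. For this I would combine two ingredients: the projection $\PP^n \times X \to X$ is closed because $\PP^n$ is complete (hence the restriction to the closed subvariety $Z$ is closed as well), and Theorem \ref{thm:fibDim1} provides upper semicontinuity of fiber dimension. Letting $d$ denote the common fiber dimension of $\pi_2 : Z \to X$, apply the semicontinuity statement to the restricted morphism $\pi_2|_{Z_i} : Z_i \to X$: the set
\[
T_i = \{\, p \in Z_i : \dim \pi_2|_{Z_i}^{-1}(\pi_2(p)) \geq d \,\}
\]
is closed in $Z_i$, hence in $Z$, so $\pi_2(T_i)$ is closed in $X$ by closedness of $\pi_2$. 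I claim $B_i = \pi_2(T_i)$. If $x \in B_i$, then $\pi_2|_{Z_i}^{-1}(x) = \pi_2^{-1}(x)$ has dimension $d$, so any point of this fiber lies in $T_i$ and $x \in \pi_2(T_i)$; conversely, if $x \in \pi_2(T_i)$, then $Z_i \cap \pi_2^{-1}(x)$ is a closed subset of the irreducible variety $\pi_2^{-1}(x)$ of dimension at least $d$, hence must equal the whole fiber, placing $x$ in $B_i$.

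With each $B_i$ closed and $X$ irreducible, the cover $X = B_1 \cup \cdots \cup B_r$ forces $B_i = X$ for some $i$. For that $i$, every fiber of $\pi_2$ lies in $Z_i$, so $Z \subseteq Z_i$ and thus $Z = Z_i$ is irreducible. I expect the closedness of $B_i$ to be the only delicate step; the remaining pieces are essentially bookkeeping with irreducible decompositions together with the elementary fact that a proper closed subvariety of an irreducible variety has strictly smaller dimension.
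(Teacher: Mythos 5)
Your proof is correct. Note first that the paper does not prove this statement at all: it is quoted as Exercise 14.3 of the cited reference, so there is no in-text argument to compare against. Your argument is the standard one for this result, and every step checks out: the sets $B_i$ cover $X$ because an irreducible fiber cannot be a finite union of proper closed subsets; $B_i = \pi_2(T_i)$ because a closed subset of dimension $\geq d$ inside an irreducible $d$-dimensional fiber must be the whole fiber; and $\pi_2(T_i)$ is closed by completeness of $\PP^n$. Two small points worth tightening. First, Theorem \ref{thm:fibDim1} as stated in the paper asserts upper semicontinuity only for \emph{dominant} morphisms, whereas $\pi_2|_{Z_i}$ need not be dominant; this is harmless (replace the target by $\overline{\pi_2(Z_i)}$, which does not change the fiber-dimension function on $Z_i$, or invoke the general Chevalley semicontinuity statement), but you should say so since you are citing that theorem verbatim. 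Second, when you conclude $x \in \pi_2(T_i)$ from $x \in B_i$ you implicitly use that the fiber over $x$ is nonempty; this is fine because an empty fiber is not irreducible, so the hypothesis forces $\pi_2$ to be surjective, but it deserves a word. With those two remarks added, the proof is complete.
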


We apply these results to the morphisms $q$ and $p$ (at least when $p$ is known to be dominant). The fiber of $q$ over a point $[\gL] \in \Gr(k,n)$ consists of (classes of) homogeneous forms $[f]$ vanishing identically along $\gL$, i.e. for which $\gL \subseteq V(f)$, and therefore is connected and of constant dimension as $\gL$ varies.

\begin{proposition}
\label{prop:incVar}
The incidence variety $I$ is irreducible and smooth of dimension $(k+1)(n-k) + \binom{n+d}{n} - \binom{k+d}{k}-1$. If $p$ is dominant, then 
\begin{enumerate}
\item for all $[f] \in \PP \Sym^d (\CC^{n+1})^*$ the Fano variety $F_k(V(f))$ has at least dimension $(k+1)(n-k) - \binom{k+d}{k}$;
\item if $[f] \in \PP \Sym^d (\CC^{n+1})^*$ is generic, the Fano variety $F_k(V(f))$ has exactly this dimension.
\end{enumerate}
\end{proposition}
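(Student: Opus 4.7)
The plan is to analyze $I$ through the projection $q: I \to \Gr(k,n)$, whose fibers are directly computable, and then read off the statements about $p$ via the two fiber-dimension theorems already recorded. For fixed $[\gL] \in \Gr(k,n)$, the fiber $q^{-1}([\gL])$ is the projectivization of the subspace of forms in $\Sym^d(\CC^{n+1})^*$ that vanish identically on $\gL \cong \PP^k$. The restriction map $r_\gL: \Sym^d(\CC^{n+1})^* \to \Sym^d(\CC^{k+1})^*$ is surjective: choosing coordinates so that $\gL = \Sp(e_0,\ldots,e_k)$, the map is literally projection onto the monomials in $z_0,\ldots,z_k$. Hence $\ker r_\gL$ has dimension $\binom{n+d}{n} - \binom{k+d}{k}$, and $q^{-1}([\gL])$ is a projective space of dimension $\binom{n+d}{n} - \binom{k+d}{k} - 1$, a value independent of $[\gL]$.

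With this in hand, the dimension formula for $I$ follows from $q$ being surjective with equidimensional fibers, and irreducibility is immediate from Theorem~\ref{thm:fibDim2}, since $\Gr(k,n)$ is irreducible and the fibers of $q$ are irreducible projective spaces of constant dimension. For smoothness, I would note that over the coordinate patch $\coord$ of Section~2.2 one can choose a basis of $\ker r_\gL$ depending algebraically on $[\gL]$ (for example, extend the linear forms cutting out the moving $\gL$ to generators of the degree-$d$ piece of the vanishing ideal by multiplying through by the standard monomial bases), trivializing $q^{-1}(\coord)$ as $\coord \times \PP^{\binom{n+d}{n}-\binom{k+d}{k}-1}$. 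Smoothness of $\Gr(k,n)$ then propagates to $I$.

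For parts (1) and (2), the key observation is that $I$ is projective — closed in the product of projective varieties $\PP\Sym^d(\CC^{n+1})^* \times \Gr(k,n)$ — so $p(I)$ is closed, and $p$ dominant therefore implies $p$ surjective. In particular every fiber $F_k(V(f)) = p^{-1}([f])$ is nonempty. Part (2) is then Theorem~\ref{thm:fibDim1} applied directly to $p$: the generic fiber has dimension
\begin{equation*}
\dim I - \dim \PP\Sym^d(\CC^{n+1})^* = (k+1)(n-k) - \binom{k+d}{k}.
\end{equation*}
Part (1) follows from the upper semi-continuity clause of the same theorem, which forces every (nonempty) fiber to have dimension at least that of the generic fiber.

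No step looks genuinely hard; the only place that requires real care is the smoothness claim for $I$, which I would handle by the local projective-bundle trivialization described above (alternatively, one can check directly in $\coord$ that the equations defining $I$ are linear in the coefficients of $f$ for each fixed Grassmannian coordinate, and verify that the Jacobian has constant corank equal to $\binom{k+d}{k}$). The rest is bookkeeping between the two fiber-dimension theorems and the surjectivity of $r_\gL$.
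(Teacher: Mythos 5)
Your proposal is correct and follows essentially the same route as the paper: irreducibility and the dimension of $I$ via the constant-dimensional projective-space fibers of $q$ together with Theorems \ref{thm:fibDim1} and \ref{thm:fibDim2}, smoothness via the projective-bundle structure of $q$, and the fiber statements for $p$ from Theorem \ref{thm:fibDim1}. The extra details you supply (surjectivity of the restriction map $r_\gL$, properness of $p$ giving nonemptiness, and the semicontinuity argument combined with irreducibility of $I$ for part (1)) are exactly the right way to flesh out the paper's terse argument.
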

\begin{proof}
The irreducibility and dimension claims for $I$ follow from Theorems \ref{thm:fibDim1} and \ref{thm:fibDim2}, as do the dimension statements about $F_k(V(f))$. To prove that $I$ is smooth, note that $I$ is a projective bundle over $\Gr(k,n)$ via $q$.
\end{proof}

For Fano varieties of intersections of hypersurfaces, the main difference is notational. Let $d_1, \ldots, d_s$ be integers greater than or equal to 2, and set $\db = (d_1, \ldots, d_s)$. For any $m \in \NN$, define
\begin{align}
\label{eq:multiNotn}
\begin{split}
\db + m &= (d_1 + m, \ldots, d_s + m), \text{ and}\\
\binom{\db}{m} & = \sum_{i=1}^s \binom{d_i}{m}.
\end{split}
\end{align}
Further set $\Sym^\db(\CC^{n+1})^* = \oplus_{i=1}^s \Sym^{d_I}(\CC^{n+1})^*$. We will write elements of the vector space $\Sym^\db (\CC^{n+1})^*$ as $\fb = (f_1, \ldots, f_s)$, and elements of $\PP \Sym^\db(\CC^{n+1})^*$ as $[\fb]$. We denote the corresponding intersection of hypersurfaces in $\PP^n$ by $V(f)$. Note that while $\PP \Symd$ is a parameter space for hypersurfaces of degree $d$ in $\PP^n$, this is not the case for $\PP \Sym^\db(\CC^{n+1})^*$ and intersections of hypersurfaces in $\PP^n$. For example, if $\fb = (z_1 - z_2, z_0 z_2 - z_1^2)$ and $\gb = ( z_1 - z_2, z_0 z_2(z_0 - z_2))$, then $[\fb] \neq [\gb]$, even though $V(\fb) = V(\gb)  = [1,0,0] \cup [1,1,1]$. More generally, replacing any generator $f_i$ by an element of the ideal generated by the other generators give other examples of this phenomenon.

If $V(\fb)= V(f_1, \ldots, f_s) \subseteq \PP^n$ is the common vanishing locus of homogeneous polynomials of degree $d_1, \ldots, d_s$ respectively, then in the incidence correspondence (\ref{eq:incCor}) replace $\PP \Symd$ with $\PP \Symdb$, and $[f]$ with $[\fb]$. Then the dimension of $\PP \Symdb$ is $\binom{\db + n}{n} - 1$, and the analogue of Proposition (\ref{prop:incVar}) holds with the obvious modifications for the dimensions involved. In the next section, we calculate the dimension of Fano varieties of generic complete intersections by first establishing the dominance of $p$ subject to inequalities in $n$, $k$, and $\db$.

\section{Fano varieties of complete intersections}
\label{sec:FanoCI}

In this section we give an expanded proof of a main result on Fano varieties of complete intersections from
\cite{MR1654757} that is then generalized in \cite{MLFano} to compute
the identifiability index for Stationary Subspace Analysis; see \cite{franzML}. Define for $n, k,s \in \NN$ and $\db = (d_1, \ldots, d_s) \in \NN^s$
\begin{align*}
&\del(n,\db,k)  = (k+1)(n-k) - \binom{\db+k}{k}, \\
&\delm(n,\db,k) = \min\{ \del(n,\db,k), n - 2k - s \}.
\end{align*}

\begin{theorem}[Theorem 2.1, \cite{MR1654757}]
\label{thm:DM}
Let $[\fb] = [(f_1, \ldots, f_s)] \in \PP \Symdb$,
and let $V(\fb)\subseteq \PP^n$ be
the common vanishing locus of the $f_i$.
\begin{enumerate}
\item If $\delm(n,k,s) < 0$, then for generic $[\fb]$ the variety $F_k(V(\fb))$ is empty.
\item If $\delm(n,k,s) = 0$, then for generic $[\fb]$ the variety
  $F_k(V(\fb))$ is non-empty, and smooth of dimension $\del(n, \db, k)$.
\item If $\delm(n,k,s) > 0$, then for generic $[\fb]$ the variety
  $F_k(V(\fb))$ is also connected.
\end{enumerate}
\end{theorem}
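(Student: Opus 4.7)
The plan is to work entirely with the incidence correspondence $I \subseteq \PP\Symdb \times \Gr(k,n)$ of (\ref{eq:incCor}) and its projections $p$, $q$. The multi-degree version of Proposition~\ref{prop:incVar} gives that $I$ is irreducible and smooth, and a straightforward dimension count yields
\begin{equation*}
\dim I - \dim \PP\Symdb = (k+1)(n-k) - \binom{\db+k}{k} = \del(n,\db,k),
\end{equation*}
so all three statements reduce to properties of $p$ over a generic $[\fb]$: part (1) becomes non-dominance, part (2) becomes dominance together with smoothness of the generic fiber at the expected dimension, and part (3) becomes connectedness of that generic fiber.

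\textbf{Parts (1)--(2): dominance and smoothness.} If $\del(n,\db,k) < 0$, then $\dim I < \dim \PP\Symdb$, so $p$ cannot be dominant and the generic Fano variety is empty. When $\delm \geq 0$, I would exhibit a pair $([\fb_0],[\gL_0]) \in I$ at which the differential $dp$ is surjective. Taking $\gL_0 = \Sp(e_0,\ldots,e_k)$ and choosing $\fb_0$ in a form adapted to $\gL_0$ (a Fermat-type sum works classically), one translates surjectivity of $dp$ into surjectivity of a cohomological evaluation map built from the normal bundle sequence
\begin{equation*}
0 \to N_{\gL_0/V(\fb_0)} \to N_{\gL_0/\PP^n} \to \bigoplus_{i=1}^s \calO_{\gL_0}(d_i) \to 0.
\end{equation*}
Since $N_{\gL_0/\PP^n} \cong \calO_{\gL_0}(1)^{\oplus(n-k)}$ and the higher cohomology of the line bundles $\calO_{\gL_0}(d_i)$ on $\gL_0 \cong \PP^k$ vanishes, the evaluation map on global sections can be made of full rank by a judicious choice of $\fb_0$. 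Generic smoothness in characteristic zero then propagates, via Theorem~\ref{thm:fibDim1}, to give that the generic fiber of $p$ is smooth of dimension $\del$.

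\textbf{The subtle case of (1), and part (3).} If $\delm < 0$ because $n-2k-s < 0$ while $\del \geq 0$, the crude dimension count no longer forces emptiness. Here I would argue that when $n-s < 2k$ the induced map on global sections in the normal bundle sequence above is obstructed from being surjective at any point of $I$, so $p$ still fails to dominate and the generic fiber is empty. For part (3), realize $F_k(V(\fb))$ as the zero locus on $\Gr(k,n)$ of the section of the vector bundle $\bigoplus_{i=1}^s \Sym^{d_i}(\calS^*)$ induced by $\fb$, where $\calS$ is the tautological rank $(k+1)$ subbundle. This bundle is globally generated and ample, so a Bertini-type connectedness theorem for zero loci of ample vector bundles (equivalently, Fulton--Hansen connectedness applied inside $I$) gives that the generic zero locus is connected whenever its expected dimension is positive and the ambient condition $n-2k-s > 0$ is met.

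\textbf{Main obstacle.} The crux of the argument is pinning down the role of the invariant $n-2k-s$, which is invisible in the naive comparison of $\dim I$ with $\dim \PP\Symdb$ and must be recovered either from an obstruction-theoretic analysis of $dp$ (for the emptiness direction of part (1)) or from a positivity/Lefschetz input on $\Gr(k,n)$ (for the connectedness in part (3)). Ensuring that these two appearances of $n-2k-s$ match, and producing a specific $\fb_0$ realizing surjectivity of $dp$ in Step 2, is where I expect the real work to lie.
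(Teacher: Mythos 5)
Your framework (the incidence correspondence, dominance of $p$, the fiber-dimension theorem, generic smoothness in characteristic zero) matches the paper's, and the easy half of part (1) (when $\del(n,\db,k)<0$) is handled the same way. But the proposal has gaps at exactly the points where the real content lies. First, the existence of a single point of $I$ where $\diff p$ is surjective is asserted via ``a Fermat-type sum works classically,'' but this is the step the paper explicitly declines to take (``less amenable to proof''): writing $\fb = (\sum_j z_j g_{ij})_i$ adapted to $\gL_0$, surjectivity of $\diff p$ is equivalent to surjectivity of the multiplication map $\alpha_\fb$ of (\ref{eq:alpha}), i.e.\ to a maximal-rank statement for multiplication by linear forms on $\PP^k$, which is nontrivial for general $k$ and $\db$ and is not settled by naming a Fermat example. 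The paper's substitute is the entire chain Lemma \ref{lemma:DM22} $\to$ the stratification by $\calL(t)$ $\to$ the combinatorial minor bound of Lemma \ref{lemma:DM28} $\to$ the concavity of $\psi$, yielding $\codim(Z_k,I)>0$; you have no replacement for this. Second, your handling of the case $\del\ge 0 > n-2k-s$ is an unsubstantiated claim that the normal-bundle evaluation map ``is obstructed at any point of $I$.'' In fact this case occurs only for $\db=(2)$ (for $\db\ne(2)$ one checks $\delm<0$ forces $\del<0$), and there the correct argument is the classical one the paper gives: a quadric containing a $k$-plane with $n\le 2k$ is necessarily singular.

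Third, the connectedness argument fails as stated. While $F_k(V(\fb))$ is indeed the zero locus of a section of $E=\bigoplus_i \Sym^{d_i}(\calS^*)$, this bundle is globally generated but \emph{not} ample on $\Gr(k,n)$ for $k\ge 1$: the restriction of $\calS^*$ to a line in the Grassmannian contained in a sub-Grassmannian of $k$-planes through a fixed $(k-1)$-plane splits off a trivial summand, so no Bertini-type theorem for ample vector bundles (nor Fulton--Hansen directly) applies; moreover such positivity arguments would not produce the precise threshold $n-2k-s>0$. The paper instead derives connectedness from the Stein factorization of $p$ together with the Zariski--Nagata purity theorem, which forces the finite part $\pi$ to be unramified precisely because $\codim(Z_k,I)\ge \delm+1 > 1$ --- the same codimension estimate that drives the dimension and smoothness statements. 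In short, the one quantitative input you are missing, the lower bound on $\codim(Z_k,I)$ in terms of $\delm$, is what simultaneously delivers all three parts of the theorem.
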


In outline, the proof of \ref{thm:DM} is
as follows: to obtain the dimension claims, we apply the theory of the dimension of the fiber
to the map $p$, which will imply that, for a generic $[\fb] \in \PP \,
\Symdb$, the Fano variety $F_k(V_\fb)$ has codimension
$\binom{\db + k}{k}$ in $\Gr(k,n)$, thus giving the dimension
statements of the theorem. To characterize when $p$ is dominant, we observe that if there is a single point $([\fb], [\gL]) \in I$
where $\diff p$ is surjective, then $p$ is in fact
dominant. This fact can be viewed as the algebro-geometric version of the
inverse function theorem. The plan of \cite{MR1654757} is 
to show, under assumptions on $n,\db$ and $k$, that the locus of points where $\diff  p$ is not surjective
forms a codimension $\geq 1$ subvariety of $I_r$, and therefore is not empty.

For the smoothness claims of the theorem, we show that the closure of the image
under $p$ of points $([\fb], [\gL]) \in I$ where $p$ is not
smooth is not all of $\PP \, \Symdb$. Then smoothness of $p$ over a
dense subset (proved in the establishing the dimension results), will imply that the generic fiber is also
smooth. The connectedness of $F_k(V(\fb))$ follows then by applying
the Stein factorization to $p$.

For a fixed $\gL \in Gr(k,n)$, we can
choose coordinates so that
\begin{equation}
\label{eq:canonL}
\gL = \Sp(e_0, \ldots, e_k).
\end{equation}
In the case $\db = (2, \ldots, 2) \in \NN^s$, for example, the fiber $q^{-1}([\gL])$
consists of (the projectivization) of all $s$-tuples of symmetric $(n+1)\times (n+1)$ matrices with
the first $(k+1)\times (k+1)$ submatrix of each matrix identically
zero. In general, the fiber is a projective subspace of has codimension $\binom{\db + k}{k}$ in
$\PP \, \Sym^{\db} (\CC^{n+1})^*$, hence $I$ is a smooth, projective variety
of the same codimension in $\PP \,
\Symdb \times \Gr(k,n)$. 

We first calculate the tangent space of $I$ at a point $([\fb],
[\gL])$. Coordinates for $\Gr(k,n)$ have been discussed in the
previous section; the corresponding coordinates for the tangent space
will be denoted by $(X_{a,b})$, where $0 \leq a \leq k$ and $k+1 \leq b \leq n$.

For $\PP \Symdb$, we first fix some
notation.
If $d \in \NN$, we index the exponents of monomials in $\Symd$ by the
multi-indices $\calJ_{d} = \{ J \in \NN^{n+1}: |J| =
d\}$.
 Choose then the basis for
$\Symd$ given by $\calB_d = \{ z^{J}: J \in \calJ_d\}$. For a multidegree $\db = (d_1,
\ldots, d_s) \in \NN^s$, let $\iota: \Sym^{d_i}(\CC^{n+1})^* \hookrightarrow
\Symdb$ be the canonical inclusion map, and take as a basis for $\Symdb$
\begin{equation}
\label{eq:basis}
\calB_\db = \cup_{i=1}^s \iota(\calB_{d_i}).
\end{equation}
A general element of $\Symdb$ will be denoted as $\sum c_\Jb \, z^\Jb = (\sum
c_{J_1} \, z^{J_1}) \oplus \ldots \oplus (\sum c_{J_s} \,
z^{J_s})$. For the tangent space to $\PP \Symdb$ we therefore can take
as coordinates $( (C_{J_1}), \ldots,
(C_{J_s}) )$, where the entries in the $i$th component run over
elements $J_i \in \calJ_{d_i}$, $1 \leq i \leq s$.

A point $([\fb], [\gL])$ is in $I$ if and only if
\begin{align*}
&f_1(\lambda_0, \ldots, \lambda_k, \,\sum_{a=0}^k \lambda_a x_{a, k+1},
\ldots,\sum_{a=0}^{k} \lambda_a x_{a,n}) 
\equiv 0,\\
&f_2(\lambda_0, \ldots, \lambda_k, \,\sum_{a=0}^k \lambda_a x_{a, k+1},
\ldots,\sum_{a=0}^{k} \lambda_a x_{a,n}) 
\equiv 0,\\ 
& \quad \vdots \\
&f_s(\lambda_0, \ldots, \lambda_k, \,\sum_{a=0}^k \lambda_a x_{a, k+1},
\ldots,\sum_{a=0}^{k} \lambda_a x_{a,n}) 
\equiv 0
\end{align*}
identically for all $\lambda_0, \ldots, \lambda_k \in
\CC$. Differentiating and evaluating at $\gL$ (i.e. substituting
$x_{0,k+1} = \ldots, = x_{k, n} = 0$) yields for each $i \in \{1, \ldots, s\}$ the equation
\begin{gather}
\label{eq:tanI}
\sum_{|J_i| = d_i} C_{J_i} \lambda^{J_i} + \sum_{j=k+1}^{n} \frac{\partial
f_i}{\partial z_j}(\lambda_0, \ldots, \lambda_k, 0, \ldots,
0) \left(\sum_{a=0}^k \lambda_a X_{a,j} \right) =  0.
\end{gather}
The dominance of $p$ could be established by showing that the
projection of solutions to
equations (\ref{eq:tanI}) to the tangent space of $\PP \Symdb$ at $[\fb]$ is full-rank for
generic $([f], [\gL]) \in I$, or merely by finding a single point of $I$ where this projection
is full-rank. The direct method thus boils down to showing certain
matrices with structure are always full-rank. While this approach
lends itself to computer testing, it is less amenable to proof.

Instead note that equations (\ref{eq:tanI}) imply that membership of $( (C_{J_1}), \ldots, (C_{J_s}))$ in the image of
$\diff p$ depends entirely on the existence of linear forms $\ell_{k+1}, \ldots, \ell_n$
on $\gL$ and how they multiply with the partials of the $f_i$
restricted to $\gL$. More precisely, labeling homogeous polynomials of
multidegree $\db$ on
$\gL$ by $\Gamma_\gL(\db)$, we define a multiplication map
\begin{equation}
\label{eq:alpha}
\begin{split}
\alpha_{\fb}: \Gamma_\gL(1)^{n-k} &\to \Gamma_\gL(\db), \\
(\ell_{k+1}, \ldots, \ell_{n}) & \mapsto (\sum_{j=k+1}^{n} \ell_j \left(\frac{\partial
f_1}{\partial z_j}\right)|_\gL, \ldots, \sum_{j=r+1}^{n} \ell_j \left(\frac{\partial
f_s}{\partial z_j}\right)|_\gL ).
\end{split}
\end{equation}

Define next
\begin{align*}
Z_k &=\overline{\{([\fb], [\gL]): p \textrm{ not smooth at }([\fb], [\gL]) \}}
\subseteq I \\
&= \overline{\{ ([\fb], [\gL]): (\diff p)|_{([\fb], [\gL])  } \textrm{ not
  full-rank}\}} \subseteq I,
\end{align*}
where the bar indicates algebraic closure. Our first goal then is to
show that for $n, k,$ and $\db$ as in the
statement of Theorem \ref{thm:DM}, $\codim(Z_k, I) > 0$. By virtue
of the previous discussion, this question can be reformulated via
polynomials on $\gL$, as formulated in the following.
\begin{lemma}
\label{lemma:DM22}
 A point $([\fb], [\gL]) \in I$ is in $Z_k$ if and only
  if $\alpha_{\fb}:\Gamma_\gL(1)^{n-k} \to \Gamma_\gL(\db)$ is not surjective.
\end{lemma}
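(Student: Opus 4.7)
The plan is to unwind the definition of $Z_k$ as the locus where $\diff p$ fails to be surjective, and to identify $\diff p$ (up to a summand on which it is visibly surjective) with the multiplication map $\alpha_\fb$. The starting point is equation (\ref{eq:tanI}), which cuts out $T_{([\fb], [\gL])} I$ inside $T_{[\fb]} \PP \Symdb \oplus T_{[\gL]} \Gr(k,n)$ as the solution set of $s$ polynomial identities in $\lambda_0, \ldots, \lambda_k$.

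First I would split the tangent coordinates $(C_{J_i})$ on $\PP \Symdb$ at $[\fb]$ into \emph{restricted} indices, those with $\supp(J_i) \subseteq \{0, \ldots, k\}$, and \emph{unrestricted} ones. Since $\lambda = (\lambda_0, \ldots, \lambda_k, 0, \ldots, 0)$ vanishes in positions $k+1, \ldots, n$, only restricted $C_{J_i}$ contribute to the term $\sum C_{J_i} \lambda^{J_i}$, so the unrestricted coordinates are left unconstrained by (\ref{eq:tanI}). Under the natural identification of the restricted sum with an arbitrary $\tilde{C}_i \in \Gamma_\gL(d_i)$ and of each $\sum_{a=0}^k \lambda_a X_{a,j}$ with an arbitrary linear form $\ell_j \in \Gamma_\gL(1)$, the $i$-th identity in (\ref{eq:tanI}) becomes $\tilde{C}_i = -\sum_{j=k+1}^n \ell_j \cdot (\partial f_i/\partial z_j)|_\gL$ inside $\Gamma_\gL(d_i)$; collected over $i$, this is exactly $(\tilde{C}_1, \ldots, \tilde{C}_s) = -\alpha_\fb(\ell_{k+1}, \ldots, \ell_n)$.

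From here the conclusion is immediate: $\diff p$ projects to the $(C_{J_i})$-component, hitting the unrestricted coordinates freely and hitting the restricted ones exactly in the image of $\alpha_\fb$ inside $\Gamma_\gL(\db)$. Because $[\gL] \in F_k(V(\fb))$ forces every monomial of each $f_i$ to involve some $z_j$ with $j > k$, the vector $\fb$ itself lies entirely in the unrestricted summand of $\Symdb$; hence passing to $T_{[\fb]} \PP \Symdb = \Symdb/\langle \fb \rangle$ preserves the restricted summand $\Gamma_\gL(\db)$ untouched. Therefore $\diff p|_{([\fb],[\gL])}$ is surjective if and only if $\alpha_\fb$ is surjective, and since non-surjectivity of $\alpha_\fb$ is a closed condition on $I$ (drop in rank of a family of linear maps), the closure in the definition of $Z_k$ is redundant. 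The only real care needed is the bookkeeping in this decomposition step—separating restricted from unrestricted monomials in $\Symdb$ and verifying that $\fb$ does not contaminate the restricted summand, which is precisely where the hypothesis $([\fb], [\gL]) \in I$ is used.
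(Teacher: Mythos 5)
Your proof is correct and follows essentially the same route as the paper, which states this lemma as an immediate consequence of the discussion surrounding equation (\ref{eq:tanI}) and the definition of $\alpha_{\fb}$ rather than giving a separate argument. You supply exactly the details the paper leaves implicit — the split into restricted and unrestricted coefficients, the observation that $\fb$ itself lies in the unrestricted summand so the passage to $T_{[\fb]}\PP\Symdb$ is harmless, and the remark that the non-surjectivity locus is already closed — all of which are accurate.
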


The map $\alpha_{\fb}$ can be divided up as a composition
of simpler multiplication maps. Define
\begin{align*}
\mu: \Gamma_\gL(1) \times \Gamma_\gL(\db - 1) &\to \Gamma_\gL(\db) \\
(\ell, (g_1, \ldots, g_s)) & \mapsto (\ell g_1, \ldots, \ell g_s),
\end{align*}
so that the image of $\alpha_{\fb}$ is just the sum over $j=1, \ldots,
s$ of images of the map $\mu$ taking, for fixed $j$, $g_i = \frac{\partial f_i}{\partial z_j}|_\gL$.
Now the non-surjectivity of $\alpha_{\fb}$ can be characterized by the
existence of a hyperplane in $\Gamma_\gL(\db)$ (i.e. an element of
$\Gamma_\gL(\db)^*$) such that as $\ell$ ranges over all linear forms
on $\gL$, the image under $\mu$ of each vector of
partials $(\frac{\partial f_1}{\partial z_j}|_\gL, \ldots,
\frac{\partial f_s}{\partial z_j}|_\gL)$ sits in this hyperplane for
all $i=1, \ldots, s$. For $h \in \Gamma_\gL(\db)^*$, define
\begin{equation*}
{\calA}_{[h]} = \{(g_1, \ldots, g_s) \in \Gamma_\gL(\db - 1): \mu(\ell, (g_1,
\ldots, g_s)) \in h^\perp \textrm{ for all }\ell \in \Gamma_\gL(1)\},
\end{equation*}
and $\calZ \subseteq q^{-1}([\gL]) \times \PP \Gamma_\gL(\db)^*$ as
\begin{equation*}
\calZ = \{([\fb], [h]): \left(\frac{\partial
  f^{(j)}}{\partial z_i}|_\gL\right)_{1 \leq j \leq s} \in \calA_{[h]}
\textrm{ for all } k+1 \leq i \leq n\}.
\end{equation*}
\noindent Hence as $h^\perp$ ranges over all hyperplanes in $\Gamma_\gL(\db)$,
the $[\fb]$ such that the partials of $\fb$ restricted to $\gL$ lie in
$\calA_{[h]}$ 
sweep out points of $q^{-1}([\gL])\cap Z_k$, i.e. 
$pr_1(\calZ) = q^{-1}([\gL]) \cap Z_k$,
and therefore
\begin{equation}
\label{eq:codimIneq1}
\codim (q^{-1}([\gL]) \cap Z_k, q^{-1}([\gL]) \cap I) \geq \codim
(\calZ,  q^{-1}([\gL]) \times \PP \Gamma_\gL(\db)^*).
\end{equation}

We next obtain a lower bound for the term on the right. Recall that $\gL$ is defined in coordinates as in Equation
(\ref{eq:canonL}), so that we may identify elements of $\Gamma_\gL(d)$ with
homogeneous degree $d$ polynomials in $z_0, \ldots, z_k$. For $h
\neq 0$,  it follows that $\codim(\calA_{[h]}, \Gamma_\gL(\db - 1))$ is at most $k+1$, since
$z_0, \ldots, z_k$ form a basis for $\Gamma_\gL(1)$, and $\calA_{[h]}$ is then
defined by the $k+1$ equations $h ( \mu(z_i, (g_1, \ldots, g_s)) =
0$. For $t=1, \ldots, k+1$, we then set
\begin{equation*}
{\calL}(t) = \overline{\{h \in \Gamma_\gL(\db)^*: \codim(\calA_{[h]}, \Gamma_\gL(\db - 1)) = t\}},
\end{equation*}
so that
\begin{equation}
\label{eq:decompZ}
\calZ = \bigcup_{t=1}^{k+1} \bigcup_{[h] \in \PP \calL(t)} \{([\fb], [h]): \left(\frac{\partial
  f_j}{\partial z_i}|_\gL\right)_{1 \leq j \leq s} \in \calA_{[h]}
\textrm{ for all } k+1 \leq i \leq n\}.
\end{equation}
To estimate the codimension of each term on right-hand side, fix $t \in \{1,
\ldots, k+1\}$, and $[h]
\in \PP \calL(t)$. By definition,
the membership  $\left(\frac{\partial
  f^{(j)}}{\partial z_i}|_\gL\right)_{1 \leq j \leq s} \in
\calA_{[h]}$ is a codimension $t$ condition in $\Gamma_\gL(\db - 1)$
for each $i \in \{k+1, \ldots, n\}$. The defining conditions are
linear equations on $\Gamma_\gL(\db -
1)$, each of which factors through a projection of $pr_i: \Gamma_\gL(\db -
1) \to \Gamma_\gL(d_i - 1)$. Therefore the defining equations for
different $i, i' \in \{k+1, \ldots, n\}$ are pairwise independent, and
the codimensions from each vector $\left(\frac{\partial
  f_j}{\partial z_i}|_\gL\right)_{1 \leq j \leq s}$ add to total $t(n-k)$. It follows that
\begin{equation}
\label{eq:DM24}
\codim
(\calZ,  q^{-1}([\gL]) \times \PP \Gamma_\gL(\db)^*) \geq \min_{1 \leq
  t \leq k+1} \{t(n-k) - \dim \PP \calL(t)\}.
\end{equation}


\begin{lemma}
\label{lemma:DM28}
For $t \in \{1, \ldots, k+1\}$, 
\begin{equation*}
\dim \PP \calL(t) \leq t(k - t + 1) + \binom{\db + t - 1}{t-1}  -1.
\end{equation*}
\end{lemma}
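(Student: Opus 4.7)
The plan is to bound $\dim \PP \calL(t)$ by realizing $\calL(t)$ essentially as the image of an incidence variety on a Grassmannian whose dimension is easy to compute. The first step is to reinterpret the codimension of $\calA_{[h]}$ as the rank of a linear map. Given $h \in \Gamma_\gL(\db)^*$, define
$$\tilde h : \Gamma_\gL(1) \to \Gamma_\gL(\db - 1)^*, \qquad \tilde h(\ell)(g) := h(\mu(\ell, g)).$$
Unwinding the definitions, $g \in \calA_{[h]}$ iff $\tilde h(\ell)(g) = 0$ for every $\ell \in \Gamma_\gL(1)$, so $\calA_{[h]}$ is the annihilator of the image of $\tilde h$, whence $\codim(\calA_{[h]}, \Gamma_\gL(\db - 1)) = \rank \tilde h$. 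In particular $[h] \in \calL(t)$ forces $\rank \tilde h \leq t$, equivalently $\dim \ker \tilde h \geq k + 1 - t$.

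Next I would introduce the incidence variety
$$\calI = \{(K, h) \in \Gr(k + 1 - t,\, \Gamma_\gL(1)) \times \Gamma_\gL(\db)^* : K \subseteq \ker \tilde h\}.$$
The condition $K \subseteq \ker \tilde h$ unwinds to $h(\ell g) = 0$ for every $\ell \in K$ and every $g \in \Gamma_\gL(\db - 1)$, i.e., $h$ annihilates the subspace $K \cdot \Gamma_\gL(\db - 1) \subseteq \Gamma_\gL(\db)$. Since $K$ is a $(k+1-t)$-dimensional subspace of the $(k+1)$-dimensional space of linear forms on $\gL \cong \PP^k$, its common zero locus $\gL_K \subseteq \gL$ is a projective linear subspace of dimension $t - 1$, and $K$ generates the homogeneous ideal $I_{\gL_K}$. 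Hence $K \cdot \Gamma_\gL(d - 1) = (I_{\gL_K})_d$ for every $d \geq 1$, and applying this componentwise in the short exact sequence $0 \to (I_{\gL_K})_d \to \Gamma_\gL(d) \to \Gamma_{\gL_K}(d) \to 0$ yields
$$\dim \operatorname{Ann}(K \cdot \Gamma_\gL(\db - 1)) = \sum_{i=1}^s \dim \Gamma_{\gL_K}(d_i) = \sum_{i=1}^s \binom{d_i + t - 1}{t - 1} = \binom{\db + t - 1}{t - 1}.$$

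The final step is a straightforward dimension count. Projecting $\calI$ onto the Grassmannian factor, each fiber is a linear subspace of $\Gamma_\gL(\db)^*$ of the dimension computed above; combined with $\dim \Gr(k + 1 - t,\, k + 1) = t(k - t + 1)$, this gives $\dim \calI = t(k - t + 1) + \binom{\db + t - 1}{t - 1}$. By the first step, the image of the second projection $\calI \to \Gamma_\gL(\db)^*$ contains the affine cone over $\calL(t)$, so $\dim \calL(t) + 1 \leq \dim \calI$, and projectivizing yields the desired bound $\dim \PP \calL(t) \leq t(k - t + 1) + \binom{\db + t - 1}{t - 1} - 1$. The conceptual heart of the argument is the second step: identifying $K \cdot \Gamma_\gL(\db - 1)$ with the degree-$\db$ part of the ideal of a linear subspace of $\gL$ converts an \emph{a priori} awkward codimension estimate into a routine Hilbert-function count, which I expect to be the only step where one has to think.
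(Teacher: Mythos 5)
Your proof is correct, and it reaches the bound by a genuinely different route at the one step that matters. Both arguments share the same skeleton: reinterpret $\codim(\calA_{[h]},\Gamma_\gL(\db-1))$ as the rank of the map $\tilde h:\Gamma_\gL(1)\to\Gamma_\gL(\db-1)^*$ (in the paper this is the matrix $h(M(\mu))$), fiber the degeneracy locus over a Grassmannian recording the kernel (the paper uses the dual datum, the cokernel, via $\phi: h\mapsto \PP\coker(h(M(\mu)))\in\Gr(k-t,k)$), and finish with the theorem on fiber dimension. The difference is in how the fibers are controlled. The paper writes out the linear system cutting out $\phi^{-1}(\Sp(v_0,\ldots,v_{k-t}))$ and extracts, by a careful reverse-lexicographic indexing of monomials, an explicit upper-triangular nonzero minor of size $\binom{\db+k}{k}-\binom{\db+t-1}{t-1}$, giving an upper bound on the fiber dimension. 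You instead fix the kernel $K$ first, observe that the fiber over $K$ is exactly $\operatorname{Ann}(K\cdot\Gamma_\gL(\db-1))=\operatorname{Ann}((I_{\gL_K})_\db)\cong\Gamma_{\gL_K}(\db)^*$, and read off its dimension from the Hilbert function of the linear subspace $\gL_K=V(K)\cong\PP^{t-1}$. This computes the fibers exactly rather than bounding them, makes the constant $\binom{\db+t-1}{t-1}$ conceptually inevitable, and replaces the combinatorial minor argument (the most delicate part of the paper's proof) by a one-line exact sequence; the cost is only that you lose contact with the explicit equations for $\calA_{[h]}$, which the paper reuses later (e.g.\ in the lemma preceding Proposition \ref{prop:DM27}). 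Two small points worth a sentence each in a polished write-up: the containment $\calL(t)\subseteq\{h:\rank\tilde h\le t\}$ uses that the rank-$\le t$ locus is closed (so it absorbs the closure in the definition of $\calL(t)$), and $\dim\calI=t(k-t+1)+\binom{\db+t-1}{t-1}$ because the first projection makes $\calI$ a vector bundle (constant fiber dimension) over the irreducible Grassmannian, so Theorem \ref{thm:fibDim1} applies cleanly.
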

\begin{proof}
Note that for $t=k+1$, the above inequality is trivially fulfilled, so we assume $t \in \{1, \ldots, k\}$.
Represent the binear map $\mu: \Gamma_\gL(1) \times \Gamma_\gL(\db
- 1) \to \Gamma_\gL(\db)$ via basis choices for $\Gamma_\gL(1)$, $\Gamma_\gL(\db
- 1)$, and $\Gamma_\gL(\db)$ described in Equation (\ref{eq:basis}),
except now for polynomials in $z_0, \ldots, z_k$. We will order the
basis elements reverse-lexicographically.

Let $M(\mu)$ be the matrix representation of $\mu$ with respect to the
bases $\calB_1$, $\calB_{(\db - 1)}$, and $\calB_{\db}$. Then $M(\mu)$
is a $k+1$ by $\binom{(\db-1) +k}{k}$ matrix with entries in
$\Gamma_\gL(\db)$.
\begin{example}
\label{ex:M}
Let $n=4$, $k=1$, and $\db = (2,2)$, so that $M(\mu)$ is a $2 \times
4$ matrix with entries in $\Gamma_\gL(2,2)$.

\begin{equation*}
\begin{blockarray}{ccc|cc}
& z_0 \oplus 0 & z_1 \oplus 0 & 0 \oplus z_0 & 0 \oplus z_1\\
\begin{block}{c(cc|cc)}
z_0 & z_0^2 \oplus 0 &	 z_0 z_1 \oplus 0& 0 \oplus z_0^2 & 0 \oplus
z_0 z_1	\\
z_1 & z_0 z_1 \oplus 0 & z_1^2 \oplus 0 & 0 \oplus z_0 z_1 & 0 \oplus z_1^2\\
\end{block}
\end{blockarray}.
\end{equation*}

If $h = (z_0^2)^* \oplus (z_0^2)^* \in \Gamma_\gL(2,2)^*$, then $h(M(\mu))$ is
\begin{equation*}
\begin{blockarray}{ccc|cc}
& z_0 \oplus 0 & z_1 \oplus 0 & 0 \oplus z_0 & 0 \oplus z_1\\
\begin{block}{c(cc|cc)}
z_0 & 1 &	 0 & 1 & 0 	\\
z_1 & 0 & 0 & 0 & 0\\
\end{block}
\end{blockarray}.
\end{equation*}

If instead $h = (z_0 z_1)^* \oplus 0 \in \Gamma_\gL(2,2)^*$, then $h(M(\mu))$ is
\begin{equation*}
\begin{blockarray}{ccc|cc}
& z_0 \oplus 0 & z_1 \oplus 0 & 0 \oplus z_0 & 0 \oplus z_1\\
\begin{block}{c(cc|cc)}
z_0 & 0 & 1 & 0 & 0 	\\
z_1 & 1 & 0 & 0 & 0\\
\end{block}
\end{blockarray}.
\end{equation*}

If $k=3$ and $\db = (3,3)$ then the transpose of the matrix $M(\mu)$ is
\begin{displaymath}
M(\mu)^\trans =
  \begin{blockarray}{cc cccc cccc}
    & z_0 & z_1 & z_2 & z_3\\
    \begin{block}{c(c cccc cccc@{\hspace*{5pt}})}
    z_0^2 \oplus 0 & z_0^3 \oplus 0 & z_0^2 z_1 \oplus 0 & z_0^2 z_2 &
    z_0^2 z_3 \oplus 0 \\
    z_0 z_1 \oplus 0&z_0^2 z_1 \oplus 0 & z_0 z_1^2 \oplus 0 & z_0 z_1
    z_2 \oplus 0& z_0 z_1 z_3 \oplus 0\\
    z_1^2 \oplus 0 & z_0 z_1^2 \oplus 0 & z_1^3 \oplus 0 & z_1^2 z_2
    \oplus 0& z_1^2 z_3 \oplus 0\\
    z_0 z_2 \oplus 0 & z_0^2 z_2 \oplus 0& z_0 z_1 z_2 \oplus 0 & z_0
    z_2^2 \oplus 0& z_0 z_2 z_3  \oplus 0\\
z_1 z_2 \oplus 0 & z_0 z_1 z_2 \oplus 0& z_1^2 z_2 \oplus 0 & z_1 z_2^2
\oplus 0& z_1 z_2 z_3 \oplus 0\\
z_2^2 \oplus 0 & z_0 z_2^2 \oplus 0 & z_1 z_2^2 \oplus  & z_2^3 \oplus
0& z_2^2 z_3 \oplus 0\\
z_0 z_3 \oplus 0 & z_0^2 z_3 \oplus 0 & z_0 z_1 z_3 \oplus 0 & z_0 z_2
z_3 \oplus 0& z_0 z_3^2 \oplus 0\\
z_1 z_3 \oplus 0 & z_0 z_1 z_3 \oplus 0 & z_1^2 z_3 \oplus 0 & z_1 z_2
z_3 \oplus 0& z_1 z_3^2 \oplus 0\\
z_2 z_3 \oplus 0 & z_0 z_2 z_3 \oplus 0& z_1 z_2 z_3 \oplus 0 & z_2^2
z_3 \oplus 0& z_2 z_3^2 \oplus 0\\
z_3^2 \oplus 0 & z_0 z_3^2 \oplus 0& z_1 z_3^2 \oplus 0 & z_2 z_3^2
\oplus 0& z_3^3 \oplus 0\\
  0 \oplus  z_0^2   & 0 \oplus z_0^3  & 0 \oplus z_0^2 z_1  & 0 \oplus
  z_0^2 z_2 & 0 \oplus
    z_0^2 z_3  \\
  0 \oplus  z_0 z_1& 0 \oplus z_0^2 z_1  & 0 \oplus  z_0 z_1^2  & 0 \oplus  z_0 z_1
    z_2 & 0 \oplus  z_0 z_1 z_3 \\
0 \oplus    z_1^2  & 0 \oplus  z_0 z_1^2  & 0 \oplus  z_1^3  & 0 \oplus  z_1^2 z_2
    & 0 \oplus  z_1^2 z_3 \\
 0 \oplus   z_0 z_2  & 0 \oplus  z_0^2 z_2 & 0 \oplus  z_0 z_1 z_2  & 0 \oplus  z_0
    z_2^2 & 0 \oplus  z_0 z_2 z_3  \\
0 \oplus z_1 z_2  & 0 \oplus  z_0 z_1 z_2 & 0 \oplus  z_1^2 z_2  & 0 \oplus  z_1 z_2^2
& 0 \oplus  z_1 z_2 z_3 \\
0 \oplus z_2^2 & 0 \oplus  z_0 z_2^2  & 0 \oplus  z_1 z_2^2 & 0 \oplus
z_2^3 & 0 \oplus  z_2^2 z_3 \\
0 \oplus z_0 z_3 & 0 \oplus  z_0^2 z_3  & 0 \oplus  z_0 z_1 z_3  & 0 \oplus  z_0 z_2
z_3 & 0 \oplus  z_0 z_3^2 \\
0 \oplus z_1 z_3 & 0 \oplus  z_0 z_1 z_3  & 0 \oplus  z_1^2 z_3  & 0 \oplus  z_1 z_2
z_3 & 0 \oplus  z_1 z_3^2 \\
0 \oplus z_2 z_3   & 0 \oplus  z_0 z_2 z_3 & 0 \oplus  z_1 z_2 z_3  & 0 \oplus  z_2^2
z_3 & 0 \oplus  z_2 z_3^2 \\
0 \oplus z_3^2   & 0 \oplus z_0 z_3^2& 0 \oplus z_1 z_3^2 & 0 \oplus
z_2 z_3^2 & 0 \oplus z_3^3\\
    \end{block}
  \end{blockarray}.
\end{displaymath}
\hfill $\Diamond$
\end{example}

In general, an element $g =\sum c_\Jb \, z^\Jb = (\sum
c_{J_1} \, z^{J_1}) \oplus \ldots \oplus (\sum c_{J_s} \,
z^{J_s}) \in \Gamma_\gL(\db-1)$ is in
  $\calA_{[h]}$ if and only if, for all $i \in \{0, \ldots, k\}$,
\begin{align*}
0& = h\left(\sum c_\Jb \,\mu(z_i, z^\Jb)\right)\\
& = h  \left(\sum
c_{J_1} \, z^{J_1+e_i} \oplus \ldots \oplus \sum c_{J_s} \,
z^{J_s + e_i}\right) = 0
\end{align*}
i.e. if and only
  if the $g$ is in the kernel of
  $h(M(\mu))$. Hence $\codim(\calA_{[h]}, \Gamma_\gL(\db-1))$ equals
  the rank of $h(M(\mu))$.

Consider now $h = \sum \g_{I_1} (z^{I_1})^* \oplus \ldots \oplus \sum
\g_{I_s} (z^{I_s})^* \in \polyLk(\db)^*$. Then the matrix entry in row $z_i$
(where $0
\leq i \leq k$) and column $z^\Jb$  (where $z^\Jb \in \iota(\calB_{d_j-1})$,
so $\Jb =  \ldots \oplus 0
\oplus J_j \oplus 0 \oplus \ldots $
for some $1 \leq j \leq s$) is
\begin{equation*}
h(M(\mu_1))_{i, \Jb} = \g_{ \ldots \oplus 0 \oplus J_j + e_i \oplus 0 \oplus \ldots}.
\end{equation*}
By the previous
paragraph, multiplication by $h(M(\mu_1))$ from the left induces a
morphism 
\begin{align*}
\phi: \calL(t) \setminus \calL(t-1) &\to \Gr(k-t, k) \\
 h &\mapsto \PP \coker (h(M(\mu)))
\end{align*}
To finish the proof of the lemma, we
obtain an upper bound on the dimension of the fibers of
$\phi$. Let $[\Sp(v_0, \ldots, v_{k-t})] \in \Gr(k-t,k)$. For each $0 \leq l \leq k-t$, we can write $v_l = (v_{0 \, l}, \ldots, v_{k
  \, l})$, with $v_{j \, l} = 0$ if $j < l$ and $v_{l \, l} \neq
0$.

\begin{example}
\label{ex:M2}
Continuing from the last case given in Example \ref{ex:M}, with $k=3$
and $\db = (3,3)$, for a fixed $h  \in \Gamma_\gL(3)^* \oplus \Gamma_\gL(3)^*$, the first
and last few defining equations of $\phi(h)$ as a subspace of $\PP^3$ are
\begin{align*}
&z_0 \g_{(3,0,0,0) \oplus 0} + z_1 \g_{(2,1, 0,0) \oplus 0} + z_2
\g_{(2, 0, 1, 0), \oplus 0} + z_3 \g_{(2,0,0,1) \oplus 0} = 0, \\
&z_0 \g_{(2,1,0,0) \oplus 0} + z_1 \g_{(1,2,0,0) \oplus 0} + z_2
\g_{(1,1,1,0) \oplus 0} + z_3 \g_{(1,1,0,1) \oplus 0} = 0, \\
&\; \ldots \\
& z_0 \g_{ 0 \oplus (1, 0, 1, 1)  } + z_1 \g_{ 0 \oplus (0, 1, 1, 1)  } + z_2
\g_{ 0 \oplus (0, 0, 2, 1)  } + z_3 \g_{ 0 \oplus (0, 0, 1, 2)  } = 0,
\\
& z_0 \g_{0 \oplus (1, 0, 0, 2)} + z_1 \g_{0 \oplus (0,1,0,2)} + z_2
\g_{ 0 \oplus (0,0,1,2)} + z_3 \g_{ 0 \oplus (0,0,0,3)} = 0.
\end{align*}

Taking now $t=1$, the fiber
$\phi^{-1}(\Sp(v_0, v_1, v_2))$ consists of all $\sum \g_{\Ib}
(z^{\Ib})^* \in \polyLk(\db)$ satisfying

\begin{align*}
&v_{0,0} \g_{(3,0,0,0) \oplus 0} + v_{1,0} \g_{(2,1, 0,0) \oplus 0} + v_{2,0}
\g_{(2, 0, 1, 0), \oplus 0} + v_{3,0} \g_{(2,0,0,1) \oplus 0} = 0, \\
&v_{0,0} \g_{(2,1,0,0) \oplus 0} + v_{1,0} \g_{(1,2,0,0) \oplus 0} + v_{2,0}
\g_{(1,1,1,0) \oplus 0} + v_{3,0} \g_{(1,1,0,1) \oplus 0} = 0, \\
&\; \ldots \\
& v_{0,0} \g_{ 0 \oplus (1,1,0,1)  } + v_{1,0} \g_{ 0 \oplus (0,2,0,1)  } + v_{2,0}
\g_{ 0 \oplus (0,1,1,1)  } + v_{3,0} \g_{ 0 \oplus (0, 1, 0, 2)  } = 0, \\
& v_{0,0} \g_{ 0 \oplus (1, 0, 1, 1)  } + v_{1,0} \g_{ 0 \oplus (0, 1, 1, 1)  } + v_{2,0}
\g_{ 0 \oplus (0, 0, 2, 1)  } + v_{3,0} \g_{ 0 \oplus (0, 0, 1, 2)  }
= 0, \\
&v_{1,1} \g_{(2,1, 0,0) \oplus 0} + v_{2,1}
\g_{(2, 0, 1, 0), \oplus 0} + v_{3,1} \g_{(2,0,0,1) \oplus 0} = 0, \\
& v_{1,1} \g_{(1,2,0,0) \oplus 0} + v_{2,1}
\g_{(1,1,1,0) \oplus 0} + v_{3,1} \g_{(1,1,0,1) \oplus 0} = 0, \\
&\; \ldots \\
&  v_{1,1} \g_{ 0 \oplus (0,2,0,1)  } + v_{2,1}
\g_{ 0 \oplus (0,1,1,1)  } + v_{3,1} \g_{ 0 \oplus (0, 1, 0, 2)  } = 0, \\
&   v_{1,1} \g_{ 0 \oplus (0, 1, 1, 1)  } + v_{2,1}
\g_{ 0 \oplus (0, 0, 2, 1)  } + v_{3,1} \g_{ 0 \oplus (0, 0, 1, 2)  }
= 0,
\\
& v_{2,2} \g_{(2, 0, 1, 0), \oplus 0} + v_{3,2} \g_{(2,0,0,1) \oplus 0} = 0, \\
& v_{2,2}
\g_{(1,1,1,0) \oplus 0} + v_{3,2} \g_{(1,1,0,1) \oplus 0} = 0, \\
&\; \ldots \\
& v_{2,2}
\g_{ 0 \oplus (0,1,1,1)  } + v_{3,2} \g_{ 0 \oplus (0, 1, 0, 2)  } = 0, \\
& v_{2,2}
\g_{ 0 \oplus (0, 0, 2, 1)  } + v_{3,2} \g_{ 0 \oplus (0, 0, 1, 2)  }
= 0,
\end{align*}

\hfill $\Diamond$
\end{example}
To bound the dimension of the solution set of these equations, recall that $\calB_{\db, [e_k]}$ is ordered reverse-lexicographically (with all
elements in the $i$th component coming before
those of the $j$th copy when $i < j$). Following \cite{MR1446187}, we index the equations
for $\phi^{-1}(\Sp(v_0, \ldots, v_{k-t}))$ as follows: to each
equation arising from a generating vector $v_l$, we associate the index
$\Ib$ of the first non-zero coefficient $\g_{\Ib}$ appearing, so
that the equation's label is $(\Ib, l)$. Note that not all labels will
occur: in the equations from Example \ref{ex:M2} the label $( (0,2,0,1)
\oplus 0,
0)$ does not occur.

From the matrix of this system of equations (rows labeled as above by
$(\Ib, l)$, columns by exponents $\Ib$ such that $z^\Ib \in \calB_{\db, [e_k]}$), we define a non-zero
minor $M$ of this matrix.
To select the rows of $M$, define first the following collections of indices for $0 \leq j \leq k-t$:
\begin{align*}
A_{j,1} &= \{\Ib = I_1 \oplus 0  \oplus \ldots \oplus 0: (I_1)_0 = (I_1)_1 = \ldots
(I_1)_{j-1} = 0, (I_1)_j \neq 0\}, \\
& \ldots \\
A_{j,s} & =  \{\Ib = 0 \oplus \ldots \oplus 0  \oplus I_s: (I_s)_0 = (I_s)_1 = \ldots
(I_s)_{j-1} = 0, (I_1)_j \neq 0\},
\end{align*}
and set $A_j = A_{j,1} \cup \ldots \cup A_{j,s}$.
The rows of the minor $M$ are then (in ordering as above)
\begin{equation*}
\{ (\Ib, 0): \Ib \in A_0\} \cup \{ (\Ib, 1): \Ib \in A_1 \} \cup
\ldots \cup  \{(\Ib, k-t): \Ib \in A_{k-t} \},
\end{equation*}
and the columns are
\begin{equation*}
\{\Ib: \Ib \in A_0\} \cup \ldots \cup \{\Ib: \Ib \in A_{k-t}\}.
\end{equation*}

To calculate the size of this minor, suppose first that $\Ib = I_1
\oplus 0 \oplus \ldots \oplus 0 \in A_j$. Then the choice of $I_1$ satisfying $(I_1)_0 = (I_1)_1 = \ldots = (I_1)_{j-1} = 0$
and $(I_1)_j \neq 0$ corresponds to the choice of monomials of degree $d_1
-1$ in variables $z_j, \ldots, z_k$. Moreover, all such
monomials (rather, their exponents) appear in the our labeling for the
system of equations. There are therefore $\binom{k - j +  d_1 -
  1}{d_1-1} = \binom{k-j + d_1 - 1}{k-j}$ such monomials with
exponents of the form
$\Ib = I_1 \oplus 0 \oplus \ldots \oplus 0$ if $j < k$. In general then, there are $\sum_{i=1}^s \binom{k-j+
  d_i - 1}{k-j} = \binom{k-j + (\db - 1)}{k-j}$ monomials in $A_{j}$.

Hence the size of the selected minor $M$ is

\begin{equation}
\label{eq:minor}
\sum_{j=0}^{k-t} |A_j| = \sum_{j=0}^{k-t}  \binom{k-j + (\db -
  1)}{k-j} = \binom{\db + k}{k} - \binom{\db + t -1}{t-1},
\end{equation}
where we have used the identity $\sum_{\ell = 0}^{m}\binom{r-\ell}{m - \ell}
= \binom{r+1}{m}$.

By construction, $M$ is an upper-triangular matrix with diagonal $v_{j
  j} \neq 0$, hence the rank of the matrix of defining equations
for $\phi^{-1}(\Sp(v_0, \ldots, v_{k-t}))$ is at least the number
above, and therefore
\begin{equation*}
\cdim \phi_1^{-1}(\Sp(v_0, \ldots, v_{k-t})) \leq \binom{\db + k}{k}
- s -
\rank(M)
 =  \binom{\db + t -1}{t-1} - s.
\end{equation*}

Using the theorem of the dimension of the fiber, the result follows:
\end{proof}

Inequality \ref{eq:DM24} combined with the above now implies that
\begin{equation*}
\codim (Z_r, I_r) \geq \min_{1 \leq t \leq k+1} \{ t(n - 2k + t - 1) -
\binom{\db + t - 1}{t - 1}\} + 1.
\end{equation*}
Define the expression over which we minimize as
\begin{equation*}
\psi(t) = t(n - 2k + t - 1) -
\binom{\db + t - 1}{t - 1}.
\end{equation*}

\begin{lemma}
If $\db \neq (2)$, then $\psi(t)$ is concave on $[1, \ldots, \infty)$,
while if $\db = (2)$ and $\delm(n, \db, k) \geq 0$, then $\psi$
is increasing.
\end{lemma}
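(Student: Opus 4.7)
The plan is to reduce concavity to a simple binomial inequality via two applications of Pascal's rule, then split on $\db$, handling the exceptional case $\db=(2)$ separately by an explicit quadratic computation.

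First, I would compute the discrete differences of $\psi$. Using the identity $\binom{d+t}{t} - \binom{d+t-1}{t-1} = \binom{d+t-1}{t}$, a direct calculation gives
\[
\psi(t+1) - \psi(t) \;=\; (n - 2k + 2t) - \sum_{i=1}^{s} \binom{d_i + t - 1}{t},
\]
and one more application of Pascal's rule yields
\[
\psi(t+2) - 2\psi(t+1) + \psi(t) \;=\; 2 - \sum_{i=1}^{s} \binom{d_i + t - 1}{t+1}.
\]
Thus discrete concavity of $\psi$ on $t \geq 1$ is equivalent to the inequality $\sum_i \binom{d_i + t - 1}{t+1} \geq 2$ for every integer $t \geq 1$. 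To verify this when $\db \neq (2)$, note that since each $d_i \geq 2$ the summand $\binom{d_i + t - 1}{t+1}$ is always at least $\binom{t+1}{t+1} = 1$, with equality iff $d_i = 2$. So if $s \geq 2$ the sum is at least $s \geq 2$; and if $s = 1$ with $d_1 \geq 3$, then already a single term satisfies $\binom{d_1 + t - 1}{t+1} \geq \binom{t+2}{t+1} = t + 2 \geq 3$. Together these cases exhaust $\db \neq (2)$.

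Finally, for $\db = (2)$ the summand collapses identically to $1$, the second difference is $+1 > 0$, and $\psi$ is in fact convex. A direct expansion of $\psi(t) = t(n-2k+t-1) - \binom{t+1}{2}$ gives $\psi(t) = \tfrac{1}{2}\, t(t + 2n - 4k - 3)$, whose first difference is $\psi(t+1) - \psi(t) = t + n - 2k - 1$. Under the hypothesis $\delm(n,(2),k) \geq 0$ one has $n - 2k - 1 \geq 0$, so this first difference is strictly positive for every $t \geq 1$, proving $\psi$ is (strictly) increasing. The only real obstacle is bookkeeping with Pascal's rule; the conceptual point is that $\db = (2)$ is exactly the case where every summand $\binom{d_i + t - 1}{t+1}$ collapses to $1$, which is precisely what forces the dichotomy in the statement.
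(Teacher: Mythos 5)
Your proof is correct and follows essentially the same route as the paper: compute the first and second forward differences of $\psi$ and observe that the second difference is $2 - \sum_i \binom{d_i+t-1}{t+1}$, which is nonpositive precisely when $\db \neq (2)$, with the $\db=(2)$ case handled by the explicit first difference $t+n-2k-1$. You are in fact slightly more careful than the paper at the boundary case $\db=(2,2)$, where the second difference equals $0$ rather than being strictly negative, so only weak concavity holds — which is all the lemma asserts.
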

\begin{proof}
Recall that we assume all $d_i \geq 2$. Consider the first and second forward differences of $\psi$:
\begin{align*}
\Delta_1(\psi)(t) &= \psi(t+1) - \psi(t) = 2t + n-2k - \binom{\db + t
  - 1}{t}, \\
\Delta_2(\psi)(t) & = \Delta_1(\Delta_1(\psi))(t) = 2 - \binom{\db + t
  - 1}{t+1}.
\end{align*}
If $\db \neq (2)$, then either $\db=(d)$ with $d \geq 3$ or $\db =
(d_1, \ldots, d_s)$ with $s \geq 2$. In both
cases $\Delta_2(\psi_2)(t) < 0$ for all $t \geq 1$, proving the
first assertion.

If now $\db = (2)$, then $\Delta_1(\psi)(t) = t + n - 2k - 1$. The
assumption $\delm(n, \db, k) \geq 0$ implies that $n - 2k - 1 \geq 0$, so the
first forward difference is strictly positive for all $t \geq 1$,
proving the second claim.
\end{proof}

In either case we obtain the crucial codimension estimate
\begin{equation}
\label{eq:codimZrIr}
\codim(Z_r, I_r) \geq \min\{\psi(1), \psi(r+1)\} + 1 = \delm(n, \db, k) - 1.
\end{equation}

We now finish the proof of the dimension statements of Theorem
\ref{thm:DM}. Suppose that $\delm(n, \db, k) < 0$; then if $\db \neq
(2)$, it follows that
$\del(n,\db,k) < 0$, and so the dimension of $I_r$ is strictly less than that
of $\PP \Symdb$. Hence for general $[\fb] \in \PP \Symdb$ the
corresponding fiber of $p$ will be empty. If $\db = (2)$, then $2k \geq
n$. Supposing $\gL$ has the form of Equation \ref{eq:canonL}, the
defining equation
$f$ has the form
\begin{equation*}
f = z_{k+1} \ell_{k+1} + z_n \ell_n,
\end{equation*}
with $\ell_i \in (\CC^{n+1})^*$. Since $n-k \leq k$, the linear
forms $\ell_i$ have a common zero on $\gL$, which implies that
the quadric defined by $f$ is singular at this point, giving a
contradiction for generic
$f$.

If instead $\delm(n, \db, k) \geq 0$, then Equation (\ref{eq:codimZrIr}) implies
the existence of a point in $I$ where $\diff p$ is surjective, and
so $p$ is dominant. By the theorem of the dimension of the fiber, Theorem
\ref{thm:fibDim1}, for generic $[\fb] \in \PP \Symdb$, the fiber
$F_k(V(\fb))$ has the expected dimension $\del(n, \db, k)$.

For the smoothness statements, we first define $\Delta_k$ as the
closure of $p(Z_k)$, and, replacing in the incidence correspondence
$I$ the Grassmannian $\Gr(k,n)$ with $\Gr(k-1,n)$, define
$\Delta_{k-1}$ as the closure of $p(Z_{k-1})$ (by convention we take
$\Delta_{-1} = \emptyset$.

\begin{proposition}
\label{prop:DM27}
There is a strict containment, $\Delta_{k} \subsetneq \PP \, \Sym^{\db} (\CC^{n+1})^*$.
\end{proposition}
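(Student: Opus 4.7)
The plan is to invoke generic smoothness, exploiting the fact that we work over $\CC$. From the preceding codimension estimate \eqref{eq:codimZrIr}, $Z_k$ is a proper closed subvariety of $I$; in particular some point of $I$ has $\diff p$ surjective, which forces $p \colon I \to \PP \, \Symdb$ to be dominant, as noted in the outline of Theorem \ref{thm:DM}. Combined with the smoothness and irreducibility of $I$ and the smoothness of $\PP \, \Symdb$, this places us in the standard setting for generic smoothness.

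Applying generic smoothness (e.g.\ \cite[III.10.7]{MR1788561}), there is a dense open subset $V \subseteq \PP \, \Symdb$ such that $p \colon p^{-1}(V) \to V$ is a smooth morphism. Because the source and target are both smooth, smoothness of $p$ at a point $x$ is equivalent to surjectivity of $\diff_x p$, so $\diff p$ is surjective at every point of $p^{-1}(V)$. By the very definition of $Z_k$ we then have $Z_k \cap p^{-1}(V) = \emptyset$. Taking images and then closures, $\Delta_k = \overline{p(Z_k)} \subseteq \PP \, \Symdb \setminus V$, which is a proper closed subset of $\PP \, \Symdb$; this is the desired strict containment.

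The main subtlety --- and the reason we cannot bypass generic smoothness --- is that the codimension estimate alone does not suffice. Even with $\codim(Z_k, I) \geq 1$, the restriction $p|_{Z_k} \colon Z_k \to \PP \, \Symdb$ could, \emph{a priori}, be dominant whenever $\del(n,\db,k) > 0$, because the fiber dimensions still balance. The characteristic-zero hypothesis, entering through generic smoothness, is precisely what rules this out, by ensuring that the fiber over a generic $[\fb]$ lies entirely in the locus where $\diff p$ is surjective, hence disjoint from $Z_k$.
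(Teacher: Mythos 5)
Your argument is correct over $\CC$, but it takes a genuinely different route from the paper's. Granting dominance of $p$ (which, as you say, follows from \eqref{eq:codimZrIr} when $\delm(n,\db,k)\geq 0$; when $p$ is not dominant the proposition is anyway trivial, since $\Delta_k\subseteq\overline{p(I)}$), generic smoothness applied to the smooth irreducible $I$ does produce a dense open $V\subseteq\PP\,\Symdb$ over which $\diff p$ is surjective at every closed point, and since the non-full-rank locus is already closed, $Z_k\subseteq p^{-1}(\PP\,\Symdb\setminus V)$ and hence $\Delta_k\subseteq\PP\,\Symdb\setminus V$, as you conclude. The paper instead follows Debarre--Manivel and gives an explicit, characteristic-free dimension count: a lemma shows that at any point of $Z_k\setminus p^{-1}(\Delta_{k-1})$ an annihilating functional $h$ must have $\calA_{[h]}$ of the maximal codimension $k+1$ (else $[\fb]$ would already lie in $\Delta_{k-1}$), which pins down the relevant stratum of $\calZ$, bounds $\dim\overline{Z_k\setminus p^{-1}(\Delta_{k-1})}$, and sets up an induction on $k$ through $\Delta_{k-1}$. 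The trade-off is clear: your route is far shorter and cleaner, and your closing remark correctly identifies why the bare codimension estimate cannot suffice when $\del(n,\db,k)>0$; but it is tied to characteristic zero, whereas the introduction claims all results carry over to any algebraically closed field, and generic smoothness is exactly the tool that fails there --- this is why the paper (and Debarre--Manivel) avoid it. Note also that once generic smoothness is invoked, Proposition \ref{prop:DM27} becomes superfluous for its intended application, since the generic fiber of $p$ is then smooth simply because fibers of smooth morphisms are smooth; so your argument effectively replaces the entire smoothness discussion rather than just this proposition. One small point of hygiene: the numbering III.10.7 you cite is Hartshorne's, so make sure the citation points to the right book.
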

We will prove Proposition \ref{prop:DM27} using the following lemma.
\begin{lemma}
Let $([\fb], [\gL])$ be an element of $Z_k - p^{-1}(\Delta_{k-1})$,
and let $h$ be a non-trivial linear form on $\Gamma_{\gL}(\db)$ such
that the image of $\alpha_{[\fb]}$ is contained in $[h]^\perp$. Then
$\calA_{[h]}$ has codimension $k+1$ in $\Gamma_\gL(\db - 1)$.
\end{lemma}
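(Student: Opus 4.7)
The strategy is a proof by contradiction: assuming $\calA_{[h]}$ has codimension $t \leq k$ in $\Gamma_\gL(\db - 1)$, I will produce a $(k-1)$-plane $\gL' \subset V(\fb)$ at which $\alpha_\fb^{\gL'}$ fails to be surjective. This places $[\fb]$ in $\Delta_{k-1}$ and contradicts the hypothesis $([\fb],[\gL]) \notin p^{-1}(\Delta_{k-1})$.

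From the discussion preceding the lemma, $\calA_{[h]}$ is the kernel of the linear map $g \mapsto (\ell \mapsto h(\mu(\ell,g)))$ from $\Gamma_\gL(\db - 1)$ to $\Gamma_\gL(1)^*$. If this map has rank $t \leq k$, then since $\dim \Gamma_\gL(1) = k+1$ its image is a proper subspace of $\Gamma_\gL(1)^*$, so we may pick a non-zero $\ell \in \Gamma_\gL(1)$ in its annihilator. By construction, $h(\ell\cdot g) = 0$ for every $g \in \Gamma_\gL(\db - 1)$. After a coordinate change on $\gL$ we may assume $\ell = z_k$, so that $\gL' := V(\ell) \cap \gL = \Sp(e_0,\ldots,e_{k-1}) \subset \gL \subset V(\fb)$. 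Since the kernel of restriction $\Gamma_\gL(\db) \to \Gamma_{\gL'}(\db)$ is precisely $\ell \cdot \Gamma_\gL(\db - 1)$, the form $h$ descends to a non-zero linear form $\bar h$ on $\Gamma_{\gL'}(\db)$.

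It then suffices to check that $\bar h$ annihilates the image of $\alpha_\fb^{\gL'}$, which is spanned by $\ell' \cdot \bigl(\tfrac{\partial f_i}{\partial z_j}|_{\gL'}\bigr)_i$ for $\ell' \in \Gamma_{\gL'}(1)$ and $j \in \{k, k+1, \ldots, n\}$. For $j \geq k+1$, view $\ell'$ as a linear form on $\gL$ involving only $z_0, \ldots, z_{k-1}$; then $\ell' \cdot (\tfrac{\partial f_i}{\partial z_j}|_\gL)_i$ lies in the image of $\alpha_\fb^\gL \subset h^\perp$, and its restriction to $\gL'$ is therefore killed by $\bar h$. The crucial case is $j = k$: here the identity $f_i|_\gL = 0$, combined with the fact that $z_k$ is a coordinate on $\gL$, immediately gives $\tfrac{\partial f_i}{\partial z_k}|_\gL = 0$, and hence $\tfrac{\partial f_i}{\partial z_k}|_{\gL'} = 0$ as well, so the $j = k$ contribution vanishes identically.

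The step most in need of care is this $j = k$ vanishing: without the observation that $\gL \subset V(\fb)$ forces all tangent-direction partials to vanish along $\gL$, the newly appearing normal direction for $\gL'$ has no visible connection to the image of $\alpha_\fb^\gL$, and the descent argument stalls. This is also what dictates the choice $\gL' \subset \gL$; a more general $(k-1)$-plane in $V(\fb)$ would not admit the same clean descent of $h$ to a witness of non-surjectivity.
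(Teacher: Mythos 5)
Your proof is correct and follows essentially the same route as the paper's: rank-deficiency of the pairing yields a nonzero linear form $\ell$ (normalized to $z_k$) with $h(z_k \cdot \Gamma_\gL(\db - 1)) = 0$, the form $h$ then descends to $\gL' = V(z_k) \cap \gL$, and the new normal direction $j = k$ is handled by the vanishing of $\partial f_i/\partial z_k$ along $\gL$, forcing $[\fb] \in \Delta_{k-1}$. The paper justifies that last vanishing by noting every monomial of $\partial f_i/\partial z_k$ still contains some $z_i$ with $i > k$, which is the same observation you make by commuting the derivative with restriction to $\gL$.
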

\begin{proof}
Since $\codim(\calA_{[h]}, \Gamma_\gL(\db - 1))$ equals the rank of
the matrix $h (M(\mu))$ (see the paragraph following Example
\ref{ex:M}), we assume for a contradiction that $h(M(\mu))$ has rank
less than $k+1$. By a linear change of coordinates, we may assume
that the final row of $h(M(\mu))$ is identically zero, i.e. $h( z_k b)
= 0$ for all $b \in \calB_{(\db -1)}$.

Defining $\gL' \subseteq \gL$ by $z_k = 0$, $h$ induces a linear
form $h'$ on $\Gamma_{\gL'}(\db)$. Define next
\begin{equation*}
\alpha'_{[\fb]}: \Gamma_{\gL'}(1)^{n - k + 1} \to \Gamma_{\gL'}(\db)
\end{equation*}
in analogy to $\alpha_{[\fb]}$ of Equation (\ref{eq:alpha}). Then
$\alpha'_{[\fb]} (\{0 \} \times \Gamma_{\gL'}(1)^{n-k}) \subseteq
h^{\perp}$, and since every monomial of $\frac{\partial
  f^{(j)}}{\partial z_k}$ will contain a factor $z_i$, $i \in \{k+1,
  \ldots, n\}$, this partial derivative vanishes
on $\gL'$. It follows that $h'$ annihilates the entire image of
$\alpha'_{[\fb]}$, which implies that $[\fb] \in \Delta_{k-1}$, a contradiction.

\end{proof}

\begin{proof}[Proof of Proposition \ref{prop:DM27}]
It follows immediately that $q^{-1}([\gL]) \cap (Z_k -
p_r^{-1}(\Delta_{k-1})) \subseteq pr_1 (\calZ_{k+1})$. For the
dimension inequality, by the discussion preceding Equation
(\ref{eq:DM24}), it follows that
\begin{equation}
\begin{split}
\dim \overline{(Z_k - pr^{-1}(\Delta_{k-1}))}  &= \dim I   - \codim
(\overline{(Z_r - pr^{-1}(\Delta_{k-1}))}, I) \\
& \leq \dim \PP \, \Sym^\db (\CC^{n+1})^* + \dim \Gr(k, n) - \binom{\db + k}{k} \\
& \quad 
- (k+1)(n-k) + \dim \PP \, \Gamma_\gL(\db) \\
& \leq \dim \PP \, \Sym^\db (\CC^{n+1})^* + 1.
\end{split}
\end{equation}

\end{proof}

Proposition \ref{prop:DM27} implies that for generic $[\fb] \in \PP
\Symdb$, the entire fiber $p^{-1}([\fb])$ is contained in the locus
of $I$ where the differential $\diff p$ is smooth, hence the
corresponding Fano variety is smooth.

To prove the final part of Theorem \ref{thm:DM}, we must unfortunately
stray beyond the prerequisites mentioned at the beginning of these
notes; the required definitions and theorems can be found, for
example, in \cite{MR1658464}.
Assume that $\delm(n, \db, k) > 0$. Since $p_r$ is proper, there
exists a Stein factorization,
\begin{equation*}
p_r: I_r \stackrel{\rho}{\to} S \stackrel{\pi}{\to} \PP \, \Sym^\db (\CC^{n+1})^*,
\end{equation*}
with $S$ a normal variety, $\rho$ connected, and $\pi$ finite.
If $\pi$ is ramified. Since $\delta_- >0$, $p$, and hence $\pi$, is
surjective. If $\pi$ is ramified, by the Zariski-Nagata purity
theorem, $Z_k$ contains the inverse image of a divisor of $S$, and
hence $\codim(Z_k, I) \leq 1$, which contradicts the inequality (\ref{eq:codimZrIr}). 

\textbf{Acknowledgements:} These notes arose from collaboration with
Franz Kir\'aly, whom I would first like to thank. I would also like to
thank Frank Gounelas and Fabian M\"uller for several helpful
conversations, and Andreas H\"oring for pointing out a mistake in an
earlier version of the paper.
\bibliographystyle{amsalpha}
\bibliography{bibliography}

\end{document}